\theoremstyle{plain}
\newtheorem{theorem}{Theorem}
\newtheorem*{theoremA}{Theorem A}
\newtheorem{lemma}[theorem]{Lemma}
\theoremstyle{definition}
\newtheorem{definition}[theorem]{Definition}
\newtheorem*{notations}{Notations}
\theoremstyle{remark}
\newtheorem{remark}[theorem]{Remark}
\newtheorem*{remarks}{Remarks}
\newenvironment{maliste}%
{ \begin{list}%
	{--}%
	{\setlength{\labelwidth}{30pt}%
	 \setlength{\leftmargin}{15pt}%
	 \setlength{\itemsep}{\parsep}}}%
{ \end{list} }
\def\N{{\mathbb N}}
\def\Q{{\mathbb Q}}
\def\R{{\mathbb R}}
\def\Z{{\mathbb Z}}
\def\CC{{\mathcal C}}
\def\Cinf{{\CC^{\infty}}}
\def\DD{{\mathcal D}}
\def\ZZ{{\mathcal Z}}
\def\RR{{\mathcal R}}
\def\id{{\mathrm{id}}}
\def\up{\textup}
\def\from{\colon} 
\def\res#1{\mathbin{|}{}_{#1}}
\def\bigbars#1{\bigl\lvert #1 \bigr\rvert} 
\def\lrbars#1{\left\lvert #1 \right\rvert} 
\def\Bars#1{\lVert #1 \rVert}
\def\lrBars#1{\left\lVert #1 \right\rVert} 
\def\th{^{\text{th}}}
\def\norm#1{\left\lVert #1 \right\rVert}
\begin{document}
\title{A connectedness result for commuting diffeomorphisms of the interval}
\author{
\begin{tabular}{c}
{Hélène \textsc{Eynard}}\\
\\
{\small{Graduate School of Mathematical Sciences}} \\
{\small{University of Tokyo}}\\
{\small{Komaba, Meguro, Tokyo 153-8914 , Japan}}\\
{\small\texttt{heynardb@ms.u-tokyo.ac.jp}}
\end{tabular}}

\date{January 4, 2010}

\maketitle

\begin{abstract}
Let $\DD^r_+ [0,1]$, $r\ge1$, denote the group of orientation-preserving $\CC^r$
diffeomorphisms of $[0,1]$. We show that any two representations of $\Z^2$ in
$\DD^r_+ [0,1]$, $r\ge2$, are connected by a continuous path of representations
of $\Z^2$ in $\DD^1_+ [0,1]$. We derive this result from the classical works by
G.~Szekeres and N.~Kopell on the $\CC^1$ centralizers of the diffeomorphisms of
$[0,1)$ which are at least $\CC^2$ and fix only $0$.
\end{abstract}

\newpage

\textbf{Acknowledgements.} This article corresponds to the third chapter of my
Ph.~D. dissertation \cite{Ey2}. I am deeply grateful to my advisor, Emmanuel
Giroux, for all the discussions we had on the subject, and for the patience and
pedagogy he showed on this occasion and many others. I am very sensible to his
availability and to his continued interest in my work. I would also like to
thank Patrice Le Calvez for thoroughly reading the first version of this text
and for encouraging me to emphasize its result. This article was written during
a one year stay in Tokyo where I was kindly invited by Pr. Takashi Tsuboi, with
the financial support of the Japan Society for the Promotion of Science.\bigskip

Let $\DD^r_+ [0,1]$, $r\ge1$, denote the group of orientation-preserving $\CC^r$
diffeomorphisms of the interval $[0,1]$. For any manifold $M$, the homomorphisms
of $\pi_1 M$ to $\DD^r_+ [0,1]$, viewed as holonomy representations ---~and so,
called below representations in $\DD^r_+ [0,1]$~---, describe $\CC^r$ foliations
of codimension~$1$ on $M \times [0,1]$ that are transverse to the $[0,1]$~factor
and tangent to the boundary. In order to understand the topology of codimension
$1$ foliation spaces, we have therefore to study the topology of the space of 
representations of finitely presented/generated groups in $\DD^r_+ [0,1]$. The 
result we prove in this paper is a first step in this direction. Applied to the 
holonomy of torus leaves, it plays a crucial role in our work on deformations of
codimension~$1$ foliations on $3$-manifolds~\cite{Ey2}. The statement is: 

\begin{theoremA} \label{t:repres}
For $r \ge 2$, any two representations of $\Z^2$ in $\DD^r_+ [0,1]$ can be
connected by a continuous path of representations of $\Z^2$ in $\DD^1_+ [0,1]$.
\end{theoremA}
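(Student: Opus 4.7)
The plan is to show that any representation $(f,g) \in \DD^r_+[0,1]^2$ can be joined to the trivial representation $(\id,\id)$ by a continuous path of commuting pairs in $\DD^1_+[0,1]^2$; concatenating two such paths proves the theorem.

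The construction rests on the Szekeres--Kopell theory cited in the introduction. On each connected component $I$ of $[0,1]\setminus\mathrm{Fix}(f)$, the restriction $f|_{\overline I}$ is $C^r$ with $r \ge 2$ and has only the two endpoints as fixed points, so by Szekeres it embeds in a unique $C^1$ flow $(\phi_I^t)_{t\in\R}$ with $\phi_I^1 = f|_{\overline I}$. Since $f$ and $g$ commute and $g$ is orientation-preserving with $g(0)=0$, $g(1)=1$, I would first verify that $g$ preserves $\mathrm{Fix}(f)$ and each such component $I$. Moreover, if $g$ fails to fix some $x_0 \in \mathrm{Fix}(f)$ pointwise, applying Kopell to the component of $[0,1]\setminus\mathrm{Fix}(g)$ through $x_0$ forces $f$ to be the identity on that entire component; so away from sub-intervals on which $f$ or $g$ is identically the identity, $g$ fixes $\mathrm{Fix}(f)$ pointwise and Kopell's theorem delivers $g|_I = \phi_I^{\alpha(I)}$ for a unique $\alpha(I)\in\R$.

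I would then propose the path $(f_s, g_s) := (f, G_s)$, with $G_s|_I := \phi_I^{s\alpha(I)}$ on each generic component and $G_s(x) := x$ on $\mathrm{Fix}(f)$; on the exceptional intervals where $f \equiv \id$, a symmetric construction is used, $G_s$ being given there by the Szekeres flow of $g$ (with $f$ replacing its role). By construction $G_s$ commutes with $f$ componentwise, $G_1 = g$ and $G_0 = \id$. The main technical point is that $G_s$ must be a $C^1$ diffeomorphism of $[0,1]$ and $s \mapsto G_s$ must be continuous in the $C^1$ topology. Away from $\mathrm{Fix}(f)$ this is immediate from the joint $C^1$ smoothness of the Szekeres flow. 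At an interior fixed point $x_0 \in \mathrm{Fix}(f)$ separating two components $I_-, I_+$ with $f'(x_0)\ne 1$, the one-sided derivative of $\phi_{I_\pm}^t$ at $x_0$ equals $f'(x_0)^t$; the $C^1$ regularity of the original $g$ at $x_0$ therefore forces $f'(x_0)^{\alpha(I_-)} = g'(x_0) = f'(x_0)^{\alpha(I_+)}$, whence $\alpha(I_-) = \alpha(I_+)$, and the one-sided derivatives of $G_s$ also match for every $s$. The parabolic case $f'(x_0) = 1$ is automatic. In my view the genuine obstacle is \emph{uniform} control at accumulation points of $\mathrm{Fix}(f)$, which may be a Cantor set: one needs quantitative Szekeres estimates bounding $\phi_I^t - \id$ and $(\phi_I^t)' - 1$ on components of small diameter in terms of the corresponding bounds for $g$, so that $G_s - \id$ and $G_s' - 1$ tend to $0$ uniformly as the components shrink, uniformly in $s \in [0,1]$.

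Once $(f,g)$ has been connected to $(f,\id)$, I would connect $(f,\id)$ to $(\id,\id)$ by the straight-line homotopy $f_u(x) := (1-u)x + u f(x)$: its derivative $(1-u) + u f'(x)$ is a convex combination of two positive numbers, hence positive, and $f_u(0)=0$, $f_u(1)=1$, so $f_u \in \DD^1_+[0,1]$ for every $u\in[0,1]$, with $f_0 = \id$ and $f_1 = f$; the pair $(f_u, \id)$ trivially commutes throughout, completing the path from $(f,g)$ to $(\id,\id)$.
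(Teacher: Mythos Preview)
Your two-step plan (first deform $g$ to $\id$ while keeping $f$ fixed, then linearly interpolate $f$ to $\id$) breaks down at the first step, and the obstruction is exactly the one the paper is built around. Szekeres' theorem produces a $C^1$ flow of $f$ only on each \emph{half-open} interval $[a,b)$ (or $(a,b]$); the two Szekeres vector fields $\nu_f^{[a,b)}$ and $\nu_f^{(a,b]}$ generically do \emph{not} agree on $(a,b)$ (Kopell), so your sentence ``by Szekeres it embeds in a unique $C^1$ flow $(\phi_I^t)_{t\in\R}$ on $\overline I$'' is false in general. Your matching argument at an interior fixed point $x_0$ only compares the one-sided derivative of the flow coming from the $x_0$-side of each adjacent component; it says nothing about the behaviour of that same flow at the \emph{far} endpoint of the component, which is where differentiability fails.

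The consequence is fatal for the strategy of holding $f$ fixed. When the two Szekeres fields on a component $(a,b)$ differ, the $C^1$ centralizer of $f\res{[a,b]}$ on the closed interval is the intersection of the two one-parameter groups, hence a \emph{discrete} cyclic group generated by some $h\in\DD^r_+[a,b]$ with $f\res{[a,b]}=h^q$ and $g\res{[a,b]}=h^p$. There is then no continuous path from $g\res{[a,b]}$ to $\id$ inside $\ZZ^1_{f\res{[a,b]}}$, so no $G_s$ as you describe can exist. The paper resolves this by deforming $f$ and $g$ \emph{simultaneously} on such ``rational'' components, via $h_t=(1-t)h+t\,\id$, $f_t=h_t^{q}$, $g_t=h_t^{p}$; on the remaining ``irrational'' components the two Szekeres fields do coincide (this is proved, not assumed) and your flow idea works. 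The uniform estimates you anticipate at accumulation points of the fixed set are indeed the main technical work (Lemma~\ref{l:Szek2}), but they only become relevant once the rational/irrational dichotomy has been set up correctly.
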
 

Note that a representation of $\Z^2$ in $\DD^r_+ [0,1]$ is nothing but a pair of
commuting $\CC^r$ diffeomorphisms. Thus, the space of these representations will
be regarded here as the subspace 
$$ \RR^r = \bigl\{(f,g) \in (\DD^r_+ [0,1])^2 \mid f \circ g = g \circ f\bigr\} 
   \subset (\DD^r_+ [0,1])^2 $$
equipped with the induced topology, where $\DD^r_+ [0,1]$ is endowed with the 
usual $\CC^r$ topology. The question we must answer can be phrased as follows:

\medskip

\emph{Given two commuting diffeomorphisms $f, g \in \DD^r_+ [0,1]$, how to 
connect the pairs $(f,g)$ and $(\id,\id)$ by a continuous path $t \in [0,1] 
\mapsto (f_t, g_t)$ in $\RR^1$\,?}

\medskip

The key tools to handle this problem are classical results due to G.~Szekeres 
and N.~Kopell \cite{Sz,Ko} (see Sections 1 and~2) about the $\CC^1$ centralizers
of the diffeomorphisms of $[0,1)$ which are at least $\CC^2$ and fix only~$0$. 
More precisely, our construction proceeds as follows. The common fixed points of
$f$ and $g$ form a closed set whose complement is a countable union of disjoint 
open intervals $(a,b) \subset [0,1]$. On each of these, Kopell's Lemma (Theorem 
\ref{t:Kopell}) shows that $f$ (resp. $g$) either coincides with the identity or
has no fixed point (Lemma \ref{l:i/r-F}). Then the works by Szekeres and Kopell 
imply that the restrictions of $f$ and $g$ to $[a,b]$ belong either to a common 
$\CC^1$ flow or to a common infinite cyclic group generated by some $\CC^r$ 
diffeomorphism of $[a,b]$ (Lemma \ref{l:i/r-F}). In either case, it is easy to 
define the desired pair $(f_t, g_t)$ on $[a,b]$ (Lemma \ref{l:connexite}). Our 
main contribution in this article is to prove that all these pairs of local 
$\CC^1$ diffeomorphisms fit together to yield a continuous path in $\RR^1$ 
(Lemma \ref{l:connexite}). A useful tool at this point is a result of F.~Takens 
\cite{Ta} (Theorem \ref{t:Takens}) which shows that many adjacent subintervals 
$(a,b)$ can be merged and treated as a single piece (Lemma \ref{l:Takens}), 
which makes it easier to check the global regularity.

\begin{remarks}
\begin{enumerate}[label=(\emph{\roman*})]
\item 
Theorem A actually extends to representations of $\Z^k$ for any $k \in \N$ with 
exactly the same proof but heavier notations.
\item
Unfortunately, Theorem A says nothing about the connectedness of $\RR^1$ because
$\CC^1$ diffeomorphisms and their $\CC^1$ centralizers are beyond the scope of 
the works of Szekeres and Kopell. On the other hand, it will become clear in the
next section that our construction does not yield in general a path in $\RR^r$.
Thus, the question whether or not $\RR^r$ is connected remains completely open 
for every integer $r$, $1 \le r \le \infty$.
\end{enumerate}
\end{remarks}

\begin{notations}
For any $\CC^k$ function $g$ on an interval $I \subset \R$, we write
\begin{equation*}
   \Bars{ g }_k
 = \sup \left\{ \bigbars{ D^mg (x) },\ 0 \le m \le k, \ x \in I \right\} 
   \in [0,+\infty]. 
\end{equation*}
Given a vector field $\nu$ on $I$, we make no difference between $\nu$ and the
function $dx(\nu)$, where $x$ is the coordinate in $I$. Thus, the pull-back
$f^*\nu$ of $\nu$ by a diffeomorphism $f$ of $I$ is the function $\nu \circ f /
Df$. Finally, we denote by $f^k$ the $k\th$ iterate of any diffeomorphism $f$ of
$I$, with $k \in \Z$.
\end{notations}

\subsection{The results of Szekeres and Kopell} \label{s:prelim}

Let $\DD^r I$, $1 \le r \le \infty$, denote the group of $\CC^r$ diffeomorphisms 
of an interval $I \subset \R$ and $\DD^r_+I$ the subgroup of those that preserve
orientation. For $1 \le k \le r$, the $\CC^k$ centralizer of an element $f \in 
\DD^r I$ is 
$$ \ZZ^k_f = \{g \in \DD^k I \,;\, g \circ f = f \circ g\} . $$
Though this may be a quite complicated object in general, works by Szekeres and 
Kopell lead to a complete understanding of the case $k=1$ and $r \ge 2$ when $I$ 
is a semi-open interval and $f$ has no interior fixed point. In this section, we
recall their results and establish bounds that we use later in our argument. The 
original references are \cite{Sz, Ko} but detailed proofs of Theorems \ref
{t:Kopell} and \ref{t:Z1} (and a lot more on the subject) can also be found in 
\cite{Na} and \cite{Yo}.

\begin{theorem}[Kopell's Lemma] \label{t:Kopell}
Let $f$ and $g$ be two commuting diffeomorphisms of $[a,b) \subset \R$ which are
of class $\CC^2$ and $\CC^1$, respectively. If $f$ has no fixed point in $(a,b)$
and $g$ has at least one, then $g = \id$.
\end{theorem}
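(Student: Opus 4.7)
The plan is to reach a contradiction by combining bounded distortion of the iterates of $f$ with an averaging argument for $g$ on a sequence of shrinking fundamental domains. Replacing $f$ by $f^{-1}$ if needed, I assume $f(x) < x$ on $(a,b)$. Let $c \in (a,b)$ be the given fixed point of $g$. By commutation, $c_n := f^n(c)$ is fixed by $g$ for every $n \ge 0$, and $c_n$ decreases to $a$ since $a$ is the only fixed point of $f$ in $[a,b)$. The intervals $J_n := [c_{n+1}, c_n]$ then partition $(a,c]$ and are each preserved by $g$, with $g|_{J_n}$ conjugate to $g|_{J_0}$ by $f^n$. Suppose for contradiction that $g \ne \id$; then $g|_{J_0}$ is nontrivial, so some $x_0 \in J_0$ satisfies $\epsilon_0 := |g(x_0) - x_0| > 0$.

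The first key step is a uniform bounded distortion estimate. Since $f \in \CC^2$ with $Df > 0$ on $[a,c]$, $\log Df$ is Lipschitz on $[a,c]$ with some constant $M$, so for any $x,y \in J_0$
$$\Bigbars{\log\bigl(Df^n(x)/Df^n(y)\bigr)} \le \sum_{k=0}^{n-1} M\,\bars{f^k(x)-f^k(y)} \le M\sum_{k=0}^{n-1}\bars{J_k} \le M(c-a),$$
giving a distortion bound $Df^n(x)/Df^n(y) \le C := e^{M(c-a)}$ independent of $n$. Setting $y_n := f^n(x_0) \in J_n$, the mean value theorem yields $|g(y_n)-y_n| = Df^n(\xi)\,\epsilon_0$ and $|J_n| = Df^n(\eta)\,|J_0|$ with $\xi,\eta \in J_0$, hence $|g(y_n)-y_n|/|J_n| \ge \epsilon_0/(C\,|J_0|) =: \delta > 0$ uniformly in $n$. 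Since $g(c_{n+1})=c_{n+1}$, the identity $g(y_n)-y_n = \int_{c_{n+1}}^{y_n}(Dg-1)\,ds$ then gives the uniform lower bound $\Bars{Dg - 1}_{\CC^0(J_n)} \ge \delta$ for every $n$.

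The contradiction follows by averaging. Since $g$ fixes both endpoints of $J_n$, $\int_{J_n} Dg = |J_n|$, so the mean of $Dg$ over $J_n$ is $1$; by continuity each $J_n$ then contains a point $p_n$ with $Dg(p_n) \le 1$ and a point $q_n$ with $Dg(q_n) \ge 1$. As $n \to \infty$, $p_n, q_n \to a$, and continuity of $Dg$ forces $Dg(a) = 1$; this makes $\Bars{Dg - 1}_{\CC^0(J_n)} \to 0$, contradicting the uniform lower bound $\delta$. The delicate step is the bounded distortion estimate: this is precisely where the $\CC^2$ hypothesis on $f$ is essential, via the finite Lipschitz constant of $\log Df$ combined with the summability $\sum_k|J_k| = c-a$, and it would fail for $f$ merely $\CC^1$.
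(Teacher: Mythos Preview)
Your argument is correct and is essentially the classical bounded-distortion proof of Kopell's Lemma. Note, however, that the paper does not actually prove Theorem~\ref{t:Kopell}: it is quoted as a known result, with the reader referred to the original sources \cite{Sz,Ko} and to the expositions in \cite{Na,Yo}. Your proof matches the standard argument given in those references.

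One minor point of exposition: the implication ``$g \ne \id \Rightarrow g|_{J_0}$ is nontrivial'' uses a little more than the partition of $(a,c]$ you stated. From conjugation one only gets $g = \id$ on $(a,c]$ when $g|_{J_0} = \id$; to conclude $g = \id$ on all of $(a,b)$ one observes that every $x \in (c,b)$ has $f^n(x) \in (a,c]$ for some $n \ge 0$, so $f^n(g(x)) = g(f^n(x)) = f^n(x)$ forces $g(x) = x$. This is implicit in your setup and does not affect the correctness of the proof.
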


\begin{theorem}[Szekeres, Kopell]\label{t:Z1}
Let $f \in \DD^r[a,b)$ be a diffeomorphism of $[a,b)$ fixing only $a$ and assume
$r \ge 2$. There exists a unique $\CC^1$ vector field $\nu_f^{[a,b)}$ on $[a,b)$
whose time-$1$ map exists and coincides with $f$. Moreover, $\nu_f^{[a,b)}$ is 
of class $\CC^{r-1}$ on $(a,b)$ and its flow, which is a one-parameter subgroup 
of $\DD^1 [a,b)$, equals the whole centralizer $\ZZ^1_f$.
\end{theorem}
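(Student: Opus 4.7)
The plan is to construct $\nu = \nu_f^{[a,b)}$ on a fundamental domain as a limit of pullbacks, extend by $f$-invariance, upgrade the regularity to $\CC^1$ at the fixed point (the main obstacle), and then use uniqueness to identify the centralizer. Replacing $f$ by $f^{-1}$ if needed, I would assume $f(x) < x$ on $(a,b)$ and set $a = 0$. Fix $x_0 \in (0,b)$; then $K = [f(x_0), x_0]$ is a fundamental domain for $f$ on $(0,b)$, and any $f$-invariant vector field on $(0,b)$ is determined by its restriction to $K$. For each $n \ge 0$ I would prescribe on $[f^{n+1}(x_0), f^n(x_0)]$ the linear vector field whose time-$1$ map coincides with $f$ there, and extend by $f$-invariance to obtain a vector field $\nu^{(n)}$ that is smooth away from the orbit of $x_0$; pulled back to $K$, this reads $\nu^{(n)}|_K = (f^n)^*(\text{linear field on } f^n(K))$. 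The $\CC^2$ hypothesis yields the classical bounded-distortion estimate: $\log Df$ is Lipschitz and $\sum_{k\ge 0} |f^k(K)| \le x_0 < \infty$, so the series $\sum_k \bars{\log Df(f^k(y)) - \log Df(f^k(z))}$ converges uniformly for $y, z \in K$. This makes $(\nu^{(n)}|_K)$ Cauchy in $\CC^1(K)$, with a limit $\nu|_K$ that automatically matches its $f$-pullback at the endpoints of $K$; extending by $f$-invariance produces $\nu$ on $(0,b)$, of class $\CC^1$ globally and $\CC^{r-1}$ on compact subsets, with time-$1$ map $f$ by construction.

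The main obstacle is pushing $\CC^1$ regularity up to the fixed point $0$. Writing $\nu = u\,\partial_x$, the invariance relation $u\circ f = Df\cdot u$ expresses $u(x)$ in terms of $u(f^n(x))$ via a product of derivatives along the orbit; combining this with the distortion estimate above yields $u(x)/x \to \log f'(0)$ as $x \to 0^+$ (in the parabolic case $f'(0) = 1$ a refinement using the second-order jet of $f$ at $0$ is needed), giving both $\nu(0) = 0$ and continuity of $D\nu$ at $0$.

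For uniqueness, any $\CC^1$ vector field $\tilde\nu$ on $[0,b)$ with time-$1$ map $f$ makes $\tilde\nu/\nu$ an $f$-invariant continuous function on $(0,b)$, and matching time-$1$ maps together with continuity at $0$ forces $\tilde\nu = \nu$. Finally, if $g \in \ZZ^1_f$, then $g^*\nu$ is $\CC^1$ on $[0,b)$ with time-$1$ map $g^{-1}fg = f$; by uniqueness $g^*\nu = \nu$, so $g$ preserves the flow of $\nu$, and since this flow acts freely and transitively on $(0,b)$, $g$ coincides with the time-$t$ map for the unique $t$ satisfying $\phi^\nu_t(x_0) = g(x_0)$. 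Conversely, every time-$t$ map of $\nu$ commutes with $f$, giving the identification $\ZZ^1_f = \{\phi^\nu_t : t \in \R\}$.
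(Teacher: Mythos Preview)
Your outline is essentially correct and follows the standard strategy, but it departs from the paper's sketch in two respects worth noting.

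For existence, the paper (following Szekeres) does not work on a fundamental domain with a piecewise--linear seed; it takes the globally defined $\CC^{r-1}$ field $\eta_0 = (f-\id)\,\partial_x$ on all of $[a,b)$ and shows directly that $\lambda\,(f^k)^*\eta_0$ converges in $\CC^1$ on $[a,b)$, where $\lambda = \log Df(a)/(Df(a)-1)$ (or $1$ in the parabolic case). The payoff is that the $\CC^1$ regularity at the fixed point is not a separate obstacle: since $\eta_0$ is already $\CC^{r-1}$ at $a$, the same distortion estimate that gives convergence on the interior gives it at the endpoint, and one never has to analyse $u(x)/x$ by hand. Your fundamental--domain construction works too, but it forces exactly the extra endpoint analysis you flag. (Also, ``the linear vector field whose time-$1$ map coincides with $f$ there'' is slightly ill-posed, since $f$ does not preserve $[f^{n+1}(x_0),f^n(x_0)]$; presumably you mean an affine field matching $f$ at the two endpoints, and this is harmless.)

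For uniqueness and the centralizer, the paper reverses your logical order. It deduces $\ZZ^1_f=\{\phi^\nu_t:t\in\R\}$ \emph{directly} from Kopell's Lemma: given $g\in\ZZ^1_f$, choose $t$ with $\phi^\nu_t(x_0)=g(x_0)$; then $\phi^\nu_{-t}\circ g$ commutes with $f$ and has an interior fixed point, hence is the identity. Uniqueness of $\nu$ is then an immediate corollary (any competing flow lies inside $\{\phi^\nu_t\}$). Your route---prove uniqueness first via the $f$-invariant ratio $\tilde\nu/\nu$, then deduce $g^*\nu=\nu$---is elegant, but as stated it has a small gap: ``matching time-$1$ maps together with continuity at $0$'' does not by itself force the ratio to be constant, because on $(0,b)$ there are many $f$-invariant $\CC^1$ fields with time-$1$ map $f$ (one for each positive $1$-periodic density in the linearising coordinate). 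What singles out $\nu$ is the $\CC^1$ extension to $0$: in the hyperbolic case both fields have derivative $\log Df(0)$ at $0$, so the ratio tends to $1$ and $f$-invariance finishes; in the parabolic case one must use that $D\tilde\nu$ and $D\nu$ both tend to $0$, which forces the derivative of the periodic density to vanish. The Kopell route sidesteps this case analysis entirely.
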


In this theorem, the existence part is due to Szekeres, and the uniqueness part 
follows from Kopell's Lemma. The vector field $\nu_f^{[a,b)}$ will be called the
\emph{Szekeres vector field of $f$}. It can be nicely expressed in the form:
\begin{equation} \label{e:szek}
   \nu_f^{[a,b)} = \begin{cases}
   \displaystyle{ \lambda \lim_{k \to + \infty} (f^k)^*\eta_0} 
 & \text{if $f(x) < x$ for all $x \in (a,b)$,} \\
   \displaystyle{ \lambda \lim_{k \to - \infty} (f^k)^*\eta_0} 
 & \text{if $f(x) > x$ for all $x \in (a,b)$,}
\end{cases}
\end{equation}
where $\eta_0 := (f-\id) \partial_x$ is a $\CC^{r-1}$ vector field on $[a,b)$, 
and $\lambda := \frac{ \log Df(a) }{ Df(a)-1 }$, or $1$ if $Df(a) = 1$. In other
words, the proof of Szekeres' Theorem consists in showing that:
\begin{itemize}
\item
the vector fields $\lambda (f^k)^*\eta_0$ converge in the $\CC^1$ topology as 
$k$ goes to $\pm\infty$ (depending on the sign of $f-\id$), and 
\item
$f$ is the time-$1$ map of the limit vector field.
\end{itemize}
Complete proofs of these assertions can be found in \cite{Na} and \cite{Yo}).

\begin{remark}
In general, one cannot expect the sequence $(f^k)^*\eta_0$ to converge in a
stronger topology, even if the diffeomorphism (and thus every $(f^k)^*\eta_0$,
$k \in \Z$) is $\CC^\infty$. Indeed, F.~Sergeraert constructed in \cite{Se} a
$\Cinf$ diffeomorphism of $\R_+$ whose Szekeres vector field is not $\CC^2$.
\end{remark}

Expression \eqref{e:szek} leads to the following estimates which control the
Szekeres vector field in terms of the given diffeomorphism and reflect some
continuous dependence (this continuity was studied more thoroughly by J.-C.
Yoccoz in \cite{Yo}; our bounds are established using arguments similar to his).

\begin{lemma} \label{l:Szek2}
Let $f \in \DD^r[a,b)$ be a diffeomorphism of $[a,b)$ fixing only $a$, with $0
\le a < b \le 1$ and $r \ge 2$. If $\norm{ f - \id }_2 < \delta < 1$ then the
Szekeres vector field $\nu := \nu_f^{[a,b)}$ satisfies
$$ \sup_{ (a,b) } \lrbars{ \log \frac \nu {f-\id} } < u(\delta) \quad 
   \text{and} \quad
   \sup_{ [a,b) } | D\nu | < u(\delta) $$ 
for some universal continuous function $u \from [0,1)\; \to \R$ \up(independent 
of $f$, $a$ and $b$\up) vanishing at $0$. As a consequence, there exists another
universal continuous function $v$ vanishing at $0$ such that
$$ \norm{ f^t - \id }_1 < t \, v(\delta) \quad \text{for all } t \in [0,1], $$
where $\{ f^t \}_{t \in \R}$ denotes the flow of $\nu$, with $f^1 = f$. 
\end{lemma}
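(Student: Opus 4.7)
The plan is to exploit Szekeres' formula $\nu = \lambda \lim_{k\to+\infty}(f^k)^*\eta_0$ from \eqref{e:szek}, working WLOG in the case $f(x) < x$ on $(a,b)$ (the other case is symmetric). Writing $h := f - \id$ and using $(f^k)^*\eta_0 = (h\circ f^k)/Df^k$, the ratio $(f^k)^*\eta_0/h$ becomes telescopic after taking a logarithm:
$$\log\frac{(f^k)^*\eta_0(x)}{h(x)} = \sum_{i=0}^{k-1}\log\frac{h(f^{i+1}(x))}{h(f^i(x))\,Df(f^i(x))}.$$
Applying the mean value theorem to $h\circ f = f\circ f - f$ gives $h(f(y)) = Df(\zeta)\,h(y)$ for some $\zeta$ between $f(y)$ and $y$, so each summand equals $\log(Df(\zeta)/Df(y))$, which, using $\Bars{D^2 f}_\infty < \delta$ and $Df \ge 1-\delta$, is controlled by a constant times $\delta\,|h(f^i(x))|$. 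Since the iterates $f^i(x)$ decrease monotonically to $a$, the sum $\sum_i |h(f^i(x))|$ telescopes to $x - a \le 1$. Passing to the limit $k\to\infty$ and noting that $\lambda = \log Df(a)/(Df(a)-1) \to 1$ as $\delta \to 0$ yields the first bound with some $u(\delta)\to 0$.

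The second bound rests on the uniqueness clause of Theorem \ref{t:Z1}: the vector field $f^*\nu$ is also of class $\CC^1$, and its time-$1$ map is $f$, so $f^*\nu = \nu$, i.e.\ $\nu\circ f = Df\cdot \nu$. Differentiating this identity and iterating it gives
$$D\nu(f^k(x)) - D\nu(x) = \sum_{i=0}^{k-1}\frac{D^2 f(f^i(x))}{Df(f^i(x))}\,\nu(f^i(x)),$$
with $D\nu(a) = \log Df(a)$ because $f$ is the time-$1$ map of $\nu$ at the fixed point $a$. Letting $k\to\infty$, the factor $D^2 f/Df$ is bounded by $\delta/(1-\delta)$, and the first bound provides $|\nu(f^i(x))| \le e^{u(\delta)}|h(f^i(x))|$; the same telescoping estimate as before then controls the whole series. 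I expect this to be the main obstacle: the second estimate genuinely relies on the first one to tame the tail of the series, so the two bounds must be proved in sequence rather than independently.

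For the flow consequence, I would integrate $\partial_t f^t = \nu\circ f^t$ to get $f^t - \id = \int_0^t \nu\circ f^s\,ds$, so $|f^t - \id| \le t\sup|\nu| \le t\,e^{u(\delta)}\delta$ by the first bound. For the derivative, $Df^t = \exp\int_0^t (D\nu)\circ f^s\,ds$, and the inequality $|e^y-1|\le |y|e^{|y|}$ combined with the second bound yields $|Df^t - 1| \le t\,u(\delta)\,e^{u(\delta)}$ for $t \in [0,1]$. Setting $v(\delta) := \max\bigl(e^{u(\delta)}\delta,\; u(\delta)\,e^{u(\delta)}\bigr)$ then gives the desired uniform estimate and completes the plan.
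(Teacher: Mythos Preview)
Your argument for the first bound is essentially the paper's: both telescope $\log\bigl((f^k)^*\eta_0/\eta_0\bigr)$ as $\sum_i\theta\circ f^i$, use the mean value theorem to write $\theta(y)=\log Df(\zeta)-\log Df(y)$ with $\zeta$ between $f(y)$ and $y$, bound this by $\frac{\delta}{1-\delta}\lvert h(y)\rvert$, and let the sum $\sum_i\lvert h(f^i(x))\rvert$ collapse to $x-a\le 1$.

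For the second bound you take a genuinely different route. The paper differentiates the approximants $\eta_k=(f^k)^*\eta_0$ directly, arriving at
\[
D\eta_k = D\eta_0\circ f^k - \sum_{i=0}^{k-1}(Lf\circ f^i)\,(f^{i+1}-f^i)\,\frac{\eta_k}{\eta_i},
\]
and bounds this via the ratio estimate $\bigl|\log(\eta_k/\eta_i)\bigr|\le\frac{\delta}{1-\delta}$ just proved. You instead exploit the $f$-invariance of the \emph{limit} field, $\nu\circ f=Df\cdot\nu$ (which follows from the uniqueness clause of Theorem~\ref{t:Z1}), differentiate and iterate to get $D\nu(x)=D\nu(f^k(x))-\sum_i(Lf\circ f^i)\,\nu\circ f^i$, and feed in the boundary value $D\nu(a)=\log Df(a)$ together with the first bound $|\nu|\le e^{u(\delta)}|h|$. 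Both arguments terminate in the same telescoping series; yours is a bit cleaner because it sidesteps the pullback calculus, at the price of assuming the $\CC^1$ regularity of $\nu$ from Theorem~\ref{t:Z1} rather than re-establishing it. The paper leaves the flow consequence implicit; your derivation via $f^t-\id=\int_0^t\nu\circ f^s\,ds$ and $Df^t=\exp\int_0^t(D\nu)\circ f^s\,ds$ is correct and makes that step explicit.
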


\begin{proof}
We consider the case of a contracting diffeomorphism $f$. Let $\eta_k$ denote 
the $\CC^{r-1}$ vector field $(f^k)^*\eta_0$ on $[a,b)$ for all $k \in \N$,
where $\eta_0 = (f - \id) \partial_x$, and define $\theta \from (a,b) \to \R$ by
$\theta := \log \frac{ \eta_1 }{ \eta_0 }$. For all $x \in (a,b)$,
\begin{equation*}
   \theta (x) = \log \frac{ f^2(x) - f(x) }{ Df(x) \, (f(x) - x) } 
 = \log \left( \int_0^1 Df \bigl( x + s (f(x)-x) \bigr) \, ds \right)
 - \log (Df(x)).
\end{equation*}
Thus, since $Df$ is $\CC^{r-1}$ and positive on $[a,b)$, the map $\theta$
extends to a $\CC^{r-1}$ map on $[a,b)$. One can also write
$$ \theta(x) = \log Df(x_0) - \log Df(x) \quad 
   \text{for some} \quad x_0 \in [f(x),x]. $$
Since $\log Df$  is $\CC^{r-1}$ with $r\ge 2$, this implies
\begin{equation*}
   |\theta(x)| \le \norm{ D \log Df }_0 |x_0 - x|
 = \norm{ \frac{ D^2f }{ Df }}_0 |x_0 - x|
 \le \frac{ \delta }{ 1-\delta } (x - f(x)),
\end{equation*}
according to our hypothesis on $\norm{ f - \id }_2$. Now
$$ \log \frac{ \eta_{k+1} }{ \eta_k } 
 = \log \frac{ (f^k)^* \eta_1 }{ (f^k)^* \eta_0 }  
 = \log \frac{ \eta_1 \circ f^k }{ \eta_0 \circ f^k }
 = \theta \circ f^k, $$
so
\begin{equation} \label{e:theta}
   \left| \log \frac{ \eta_j }{ \eta_i }(x) \right|
 = \left| \sum_{k=i}^{j-1} \theta \circ f^k \right|
 \le \frac{ \delta }{ 1-\delta } \sum_{k=i}^{j-1} (f^k(x) - f^{k+1}(x))  
 \le \frac{ \delta}{ 1-\delta }.
\end{equation}
Taking $i=0$ and $j \to \infty$, this gives the first bound of the lemma since
$$ \lambda = \frac{ \log Df(a) }{ Df(a) - 1 }
 < \frac{ |\log (1-\delta)| }{ \delta } \xrightarrow[ \delta \to 0 ]{} 1 . $$  
The second estimate relies on the following calculation, where $Lg := D^2g / Dg$
for any $\CC^2$ diffeomorphism $g$:
\begin{align*}
   D\eta_k  = D \left( \frac{ \eta_0 \circ f^k }{ Df^k } \right)
&= D\eta_0 \circ f^k + (\eta_0 \circ f^k) D \left( \frac 1 { Df^k } \right) \\
&= D\eta_0 \circ f^k - (\eta_k Df^k) \frac{ D^2 f^k }{ (Df^k)^2 }  \\
&= D\eta_0 \circ f^k - Lf^k \eta_k \\
&= D\eta_0 \circ f^k - \sum_{i=0}^{k-1} (Lf \circ f^i) \, Df^i \; \eta_k \\
&= D\eta_0 \circ f^k - \sum_{i=0}^{k-1}
   (Lf \circ f^i) (f^{i+1} - f^i) \frac{ \eta_k }{ \eta_i } .
\end{align*}
The desired bound follows easilly, using \eqref{e:theta} and the fact that 
$D\eta_0$ equals $Df - 1$.
\end{proof}

\subsection{Rational and irrational connected components}

\begin{definition} \label{d:r/i-F}
Let $f, g \in \DD^r_+ [0,1]$ be two commuting diffeomorphisms and denote by $F
\subset [0,1]$ the set of their common fixed points. Let us say that a connected
component $(a,b)$ of the open set $U = [0,1] \setminus F$ is \emph{rational} or
\emph{irrational} depending on whether or not there exist relatively prime 
integers $p, q \in \Z$ such that $f^p$ and $g^q$ coincide on $(a,b)$. 
\end{definition}

For example, a component $(a,b)$ on which $f$ or $g$ induces the identity is 
rational, for $0$ and $1$ are relatively prime.

\begin{lemma} \label{l:i/r-F}
Let $f, g \in \DD^r_+ [0,1]$ be two commuting diffeomorphisms, $F$ the set of
their common fixed points and $(a,b)$ a connected component of $U = [0,1]
\setminus F$.
\begin{maliste}
\item[0.]
If $f \res{ [a,b] }$ differs from the identity, then $f$ has no fixed point in
$(a,b)$ and thus defines two Szekeres vector fields: $\nu_f^{ [a,b)}$ on $[a,b)$
and $\nu_f^{ (a,b] }$ on $(a,b]$.

\item[1.]
If the component $(a,b)$ is rational, there exist a diffeomorphism $h
\linebreak[1] \in \linebreak[2] \DD^r_+[a,b]$ and some relatively prime integers
$p, q \in \Z$ such that $f \res{ [a,b] } = h^q$ and $g \res{ [a,b] } = h^p$.
Moreover, if $f \res{[a,b]}$ is not the identity, $h \res{ (a,b) }$ coincides
with the time-$1/q$ maps of both Szekeres vector fields $\nu_f^{ [a,b) }$ and
$\nu_f^{ (a,b] }$.

\item[2.]
If the component $(a,b)$ is irrational, $\nu_f^{ [a,b) }$ and $\nu_f^{ (a,b] }$
coincide on $(a,b)$. Thus, there is a $\CC^1$ vector field $\nu_f^{[a,b]}$ on
$[a,b]$ whose time-$1$ map is $f \res{ [a,b] }$, and $g \res{ [a,b] }$ is the
time-$\tau$ map of this vector field for some $\tau \in \R \setminus \Q$.
\end{maliste}
\end{lemma}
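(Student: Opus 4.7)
For Item~0, I will argue by contradiction: assume $f \res{[a,b]} \ne \id$ and that some $p \in F_f \cap (a,b)$ exists. Since $(a,b) \cap F = \emptyset$, the sets $F_f \cap (a,b)$ and $F_g \cap (a,b)$ are disjoint; in particular $g(p) \ne p$, and $g \res{[a,b]}$ is not the identity (otherwise $p \in F$). The case where $g$ has no fixed point in $(a,b)$ is disposed of directly: Theorem~\ref{t:Z1} applied to $g \res{[a,b)}$ yields $\nu_g^{[a,b)}$ and, since $f \in \ZZ^1_g$, we have $f \res{[a,b)} = g^t$ for some $t \in \R$; both $t = 0$ (which forces $f = \id$) and $t \ne 0$ (which forces $f$ to have no interior fixed point) contradict our hypotheses. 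Hence $g$ too admits interior fixed points in $(a,b)$. To handle this remaining subcase, which is the genuine difficulty, the key observation is that the orbit $\{g^n(p)\}_{n \in \Z}$ is a strictly monotone sequence in $F_f$ whose limits at $\pm\infty$ lie in $F_f \cap F_g \cap [a,b] = \{a,b\}$, so $F_f$ accumulates at both endpoints from the inside and, symmetrically, so does $F_g$. I will select a complementary component of $F_f$ abutting $a$---on which $f$ has no interior fixed point---and apply Theorem~\ref{t:Z1} together with a germ-level analysis of the centralizer to force one of $f, g$ to coincide with the identity on some right-neighborhood $[a, a+\varepsilon)$. The contradiction will then follow from the fact that the orbit $\{g^{-n}(p)\}_{n \ge 0} \subseteq F_f \setminus F_g$ accumulates at $a$.

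For Item~1, if $f \res{[a,b]} = \id$ then $g^q = f^p = \id$ with $q \ne 0$ (else $f^{\pm 1} = \id$, already contradicted), so $g \res{[a,b]} = \id$ since $\DD^1_+[a,b]$ contains no nontrivial elements of finite order; take $h = \id$. Otherwise Item~0 gives that $f$ has no interior fixed point, so Theorem~\ref{t:Z1} produces both Szekeres vector fields; denote their flows by $\{f^t\}$ and $\{\bar f^t\}$. Then $g \res{[a,b)} = f^s$ and $g \res{(a,b]} = \bar f^{\bar s}$ for some reals $s, \bar s$, and the relation $f^p = g^q$ together with the faithfulness of each Szekeres flow yields $s = \bar s = p/q$. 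Setting $h_1 := f^{1/q}$ on $[a,b)$ and $h_2 := \bar f^{1/q}$ on $(a,b]$, I will use Bezout to choose $u, v \in \Z$ with $up + vq = 1$ and compute
\[
h_1 = h_1^{up+vq} = (h_1^p)^u (h_1^q)^v = g^u f^v = (h_2^p)^u (h_2^q)^v = h_2 \quad \text{on } (a,b).
\]
The glued map $h$ is then a $\CC^1$ diffeomorphism of $[a,b]$ satisfying $h^q = f$ and $h^p = g$, and its $\CC^r$ regularity follows from $h^q = f \in \CC^r$ together with the $\CC^{r-1}$ regularity of the Szekeres vector fields in the interior.

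For Item~2, the same setup yields $g \res{[a,b)} = f^s$ and $g \res{(a,b]} = \bar f^{\bar s}$, but $s, \bar s$ must now be irrational: a rational value $s = p/q$ in lowest terms would give $f^p = g^q$ on $[a,b)$ and hence on $[a,b]$ by continuity, contradicting irrationality of $(a,b)$. To show that $\nu_f^{[a,b)}$ and $\nu_f^{(a,b]}$ agree on $(a,b)$, I will use the identity $g^n f^m = f^{m + ns} = \bar f^{m + n \bar s}$ for all $n, m \in \Z$ and exploit density of $\{m + ns : m, n \in \Z\}$ in $\R$: choosing integer sequences $(n_k, m_k)$ with $n_k \to \infty$ and $m_k + n_k s \to 0$, the left-hand side converges pointwise to $\id$ on $(a,b)$, while the right-hand side, rewritten as $\bar f^{(m_k + n_k s) + n_k(\bar s - s)}$, sends every $x \in (a,b)$ to an endpoint as $|n_k(\bar s - s)| \to \infty$, the Szekeres flow being a free $\R$-action on $(a, b]$. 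Hence $\bar s = s$, the two flows agree at a dense set of times and therefore everywhere on $(a,b)$ by continuity, the common vector field extends as $\nu_f^{[a,b]}$, and $g$ is its time-$\tau$ map with $\tau = s \in \R \setminus \Q$.

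The main obstacle is the final subcase of Item~0, in which both $f$ and $g$ have interior fixed points: Kopell's Lemma does not directly preclude this configuration, and resolving it requires careful endpoint analysis combining the accumulation of $F_f, F_g$ at $a, b$ with the $g$-orbit structure of $p$. Once Item~0 is in hand, Items~1 and~2 follow via rather direct applications of Theorem~\ref{t:Z1}.
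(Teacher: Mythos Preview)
Your argument for Item~0 is more convoluted than necessary, and in fact your own ``key observation'' already finishes the job. Once you know that the $g$-orbit $\{g^n(p)\}_{n\in\Z}$ is strictly monotone with limits in $F\cap[a,b]=\{a,b\}$, it follows immediately that $g$ has \emph{no} fixed point in $(a,b)$: any such fixed point $d$ would trap the orbit in $(a,d)$ or $(d,b)$, preventing it from reaching both endpoints. So your ``remaining subcase'' is vacuous, and Kopell's Lemma (applied with the roles of $f$ and $g$ reversed: $g$ is $\CC^2$ without interior fixed points, $f$ is $\CC^1$ with one) gives $f\res{[a,b]}=\id$, the desired contradiction. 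This is exactly the paper's argument. Your proposed germ-level analysis is not only unnecessary but internally inconsistent as stated: you cannot select a complementary component of $F_f$ ``abutting $a$'' right after observing that $F_f$ accumulates at $a$.

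Your treatment of the case $f\res{[a,b]}=\id$ in Item~1 is garbled: you write ``$q\ne 0$ (else $f^{\pm1}=\id$, already contradicted)'', but you are precisely in the case $f=\id$, so $q=0$ is not excluded. The correct reasoning is that $q\ne 0$ would force $g^q=\id$ and hence $g=\id$, contradicting $(a,b)\subset U$; so $q=0$, $p=\pm1$, and one takes $h=g^{\pm1}$, not $h=\id$. The rest of Item~1 is fine and, once you unwind your B\'ezout identity, amounts to the paper's direct definition $h=f^s g^r$ with $pr+qs=1$ (which also makes the $\CC^r$ regularity of $h$ immediate).

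For Item~2 your approach is correct and genuinely different from the paper's. The paper conjugates each Szekeres flow to the translation flow on $\R$ via maps $\psi_a,\psi_b$, observes that $\psi_b^{-1}\psi_a$ commutes with $T_1$, and uses invariance of the rotation number under conjugacy to get $\tau_a=\tau_b$; irrationality then forces $\psi_b^{-1}\psi_a$ (which commutes with two independent translations) to be a translation fixing $0$, hence the identity. Your density argument bypasses rotation numbers entirely: once $s=\bar s$ is established via the escape-to-the-boundary contradiction, the two flows agree on the dense set of times $\Z+s\Z$ and hence everywhere. Both routes work; yours is shorter, while the paper's makes the underlying rigidity (commuting circle maps with irrational rotation number) more visible.
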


\begin{remark}
According to Kopell \cite{Ko}, for a generic $\CC^r$ diffeomorphism $f$ of
$[a,b]$ with no fixed points in $(a,b)$, the Szekeres vector fields $\nu_f^{
[a,b) }$ and $\nu_f^{ (a,b] }$ do not coincide on $(a,b)$. In other words,
$\nu_f^{ [a,b) }$ does not extend to a $\CC^1$ vector field on $[a,b]$. So one
really needs to handle the rational case seperately.

As for irrational components, one might think that having $\CC^r$ time-$t$ maps
for a dense subset $\Z + \tau \Z \subset \R$ of times $t$ would force a vector
field to be $\CC^{r-1}$. But this is not true, according to \cite{Ey1}. Thus,
the diffeomorphisms obtained with our method (cf. Lemma \ref{l:connexite}) are
only $\CC^1$ in general.
\end{remark}

\begin{proof}
Suppose $f$ has a fixed point $c$ in $(a,b)$. The sequence $(g^n(c))_{n \in \Z}$
stays in $(a,b)$, consists of fixed points of $f$ (for $f$ and $g$ commute) and
is monotone (for $c$ cannot be a fixed point of both $f$ and $g$ by definition
of $(a,b)$). Thus, this sequence converges at both ends towards points which
necessarily lie in $F$, and hence are $a$ and~$b$. Therefore $g$ has no fixed 
point in $(a,b)$ and Kopell's Lemma (Theorem \ref{t:Kopell}) shows that $f$ is 
the identity on $[a,b]$, which concludes the first point.

If $(a,b)$ is a rational component, there exist relatively prime numbers $p, q
\in \Z$ such that $f^p$ and $g^q$ coincide on $[a,b]$. Then, writing $h = (f^s
g^r) \res{ [a,b] }$ where $pr + qs = 1$, $r, s \in \Z$, one gets the desired
relations $f \res{ [a,b] } = h^q$ and $g \res{ [a,b] } = h^p$. If $f \res{ [a,b]
}$ is not the identity, Theorem \ref{t:Z1} assures that the restrictions of $f$,
$g$ and $h$ to $[a,b)$ (resp. $(a,b]$) belong to the flow of the Szekeres vector
field $\nu_f^{ [a,b) }$ (resp. $\nu_f^{ (a,b] }$). The corresponding time for
$h$ is $1/q$ since $h^q = f \res{ [a,b] }$.

Now suppose the component $(a,b)$ is irrational. Denote by $\{ f_a^t \}_{t \in
\R}$ and $\{f_b^t\}_{t \in \R}$ the flows of $\nu_f^{ [a,b) }$ and $\nu_f^{
(a,b] }$, respectively, and fix a point $c \in (a,b)$. The diffeomorphisms 
$\psi_a, \psi_b \from \R \to (a,b)$ defined by $\psi_a(t) = f_a^t (c)$ and 
$\psi_b(t) = f_b^t (c)$, respectively, conjugate $f_a^\tau \res{ (a,b)}$ and 
$f_b^\tau \res{ (a,b)}$, $\tau \in \R$, to the translation $T_\tau \from t 
\mapsto t + \tau$:
\begin{equation*}
   T_\tau = \psi_a^{-1} \circ f_a^\tau \circ \psi_a
 = \psi_b^{-1} \circ f_b^\tau \circ \psi_b.
\end{equation*}
In particular,
\begin{equation*}
   T_1 = \psi_a^{-1} \circ f \circ \psi_a
 = \psi_b^{-1} \circ f \circ \psi_b,
\end{equation*}
so 
$$ T_1 = (\psi_a^{-1} \circ \psi_b) \circ T_1 \circ 
   (\psi_b^{-1} \circ \psi_a). $$
In other words, $\psi_b^{-1} \circ \psi_a$ is a diffeomorphism of $\R$ which
commutes with the unit translation $T_1$.

According to Theorem \ref{t:Z1}, $g \res{ [a,b)}$ (resp. $g \res{ (a,b] }$)
coincides with $f_a^{\tau_a}$ (resp. $f_b^{\tau_b}$) for some time $\tau_a$
(resp. $\tau_b$). But then
\begin{equation*}
   T_{\tau_a} = \psi_a^{-1} \circ g \circ \psi_a \quad \text{and} \quad 
   T_{\tau_b} = \psi_b^{-1} \circ g \circ \psi_b
\end{equation*}
so 
$$ T_{\tau_a} = (\psi_a^{-1} \circ \psi_b) \circ T_{\tau_b} \circ 
   (\psi_b^{-1} \circ \psi_a) . $$
All four diffeomorphisms of $\R$ in this last equality commute with the unit
translation. Therefore, invariance of the rotation number under conjugacy
implies that $\tau_a = \tau_b =: \tau$. This number has to be irrational, for if
$\tau = p/q$ then $f^p$ clearly coincides with $g^q$ on $(a,b)$. But if so the
diffeomorphism $\psi_b^{-1} \circ \psi_a$ commutes with both the unit
translation and an irrational translation, and hence it must itself be a
translation. Since it fixes the origin (by construction), it is in fact the
identity. This means that the flows of $\nu_f^{ [a,b) }$ and $\nu_f^{ (a,b] }$
coincide on $(a,b)$, so these vector fields are equal on $(a,b)$.
\end{proof}

We will now see (Lemma \ref{l:Takens}) that the type of the components of $[0,1]
\setminus F$ is in fact constant on the components of $[0,1] \setminus F_0$,
where $F_0 \subset F$ is the set where both $f$ and $g$ are $\CC^r$-tangent to
the identity. This is a straightforward consequence of a theorem of Takens \cite
{Ta} (extended by Yoccoz in the finite differentability case \cite{Yo}) which 
can be stated as follows:

\begin{theorem}[Takens] \label{t:Takens}
Let $f \in \DD^r_+ (a,b)$, $r \ge 2$, be a diffeomorphism with a unique fixed
point $c$. If $f$ is not $\CC^r$-tangent to the identity at $c$, the Szekeres
vector fields $\nu_f^{(a,c]}$ and $\nu_f^{[c,b)}$ are $\CC^{r-1}$ and fit
together to yield a $\CC^{r-1}$ vector field on $(a,b)$ \up(whose time-$1$ map 
is $f$\up). Furthermore, any $\CC^r$ diffeomorphism of $(a,b)$ commuting with 
$f$ and fixing $c$ coincides with the time-$\tau$ map of this vector field for 
some $\tau \in \R$.
\end{theorem}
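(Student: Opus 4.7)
The plan is to reduce the statement to a local normal form for $f$ in a neighborhood of its fixed point $c$, then propagate the resulting picture across all of $(a,b)$ using the dynamics of $f$, and finally deduce the centralizer description from Szekeres' theorem applied separately on $(a,c]$ and $[c,b)$.

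The first step is to analyze $f$ near $c$, splitting into cases according to whether $Df(c) \neq 1$ or $Df(c) = 1$. In the hyperbolic case, Sternberg's linearization theorem supplies a $\CC^{r-1}$ coordinate $y$ near $c$ in which $f$ reads $y \mapsto \mu y$ with $\mu = Df(c)$; there $f$ is visibly the time-$1$ map of the smooth vector field $X_{\mathrm{loc}} = (\log \mu)\, y \,\partial_y$, which is $\CC^{r-1}$ in the original coordinate and extends smoothly through $c$. In the parabolic case, the hypothesis that $f$ is not $\CC^r$-tangent to the identity at $c$ produces a smallest integer $k$, $2 \le k \le r$, such that the $k\th$ Taylor coefficient of $f - \id$ at $c$ is nonzero, and one invokes the classical normal form theorem to conjugate $f$ locally, via a $\CC^{r-1}$ change of coordinates, to the time-$1$ map of a rational vector field of the form $\frac{\alpha\, y^k}{1 + \beta\, y^{k-1}}\,\partial_y$. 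In either case we obtain a $\CC^{r-1}$ vector field $X_{\mathrm{loc}}$ on some open neighborhood $V$ of $c$ whose time-$1$ map is $f \res V$.

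The second step is to globalize $X_{\mathrm{loc}}$ to a $\CC^{r-1}$ vector field $X$ on all of $(a,b)$ by dynamical extension. Since $f$ is the time-$1$ map of $X_{\mathrm{loc}}$ on $V$, one has $f_* X_{\mathrm{loc}} = X_{\mathrm{loc}}$ on $f(V) \cap V$, so the formula $X(p) = (f^n)_* X_{\mathrm{loc}}(p)$, for any $n$ with $f^{-n}(p) \in V$, defines $X$ unambiguously and inherits the $\CC^{r-1}$ regularity. Every point of $(a,b)$ reaches $V$ under sufficiently many forward or backward iterates of $f$ because $c$ is its unique fixed point, so $X$ is defined on the whole interval. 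By construction its time-$1$ map is $f$, and the uniqueness clause of Theorem \ref{t:Z1} identifies its restrictions to $(a,c]$ and $[c,b)$ with $\nu_f^{(a,c]}$ and $\nu_f^{[c,b)}$ respectively, proving the first assertion.

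For the second assertion, let $g \in \DD^r_+(a,b)$ commute with $f$ and fix $c$. Theorem \ref{t:Z1} applied to $f \res{(a,c]}$ and to $f \res{[c,b)}$ produces real numbers $\tau_a, \tau_b$ with $g \res{(a,c]} = f^{\tau_a}$ and $g \res{[c,b)} = f^{\tau_b}$ in the flow of $X$. The equality $\tau_a = \tau_b$ is forced by the jet of $g$ at $c$: in the hyperbolic case from $Dg(c) = Df(c)^{\tau_a} = Df(c)^{\tau_b}$ together with $Df(c) \neq 1$, and in the parabolic case from the fact that in the local model the leading nontrivial Taylor coefficient of the time-$\tau$ map depends injectively on $\tau$. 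The main technical obstacle is really the existence of the smooth local model itself; this is Takens' original contribution, and the non-$\CC^r$-tangency hypothesis is indispensable precisely there --- without it the leading-jet machinery collapses and Sergeraert-type counterexamples appear, even in the $\Cinf$ setting.
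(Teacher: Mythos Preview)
The paper does not prove this theorem at all: it is quoted as a result of Takens \cite{Ta}, extended to finite regularity by Yoccoz \cite{Yo}, and used as a black box. So there is no ``paper's own proof'' to compare against; what you have written is a reasonable sketch of how the statement, as formulated here in terms of Szekeres vector fields, is deduced from the underlying normal-form theorems. Your globalization step (pushing $X_{\mathrm{loc}}$ forward by iterates of $f$ and invoking the uniqueness in Theorem~\ref{t:Z1} to identify the result with $\nu_f^{(a,c]}$ and $\nu_f^{[c,b)}$) is correct, as is the hyperbolic jet-matching $Dg(c)=\mu^{\tau_a}=\mu^{\tau_b}$.

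One genuine caveat in the parabolic case: you carry out the jet-matching in the normal-form chart, where $\psi^{-1}\circ g\circ\psi$ is only $\CC^{r-1}$. If the first nontrivial jet of $f-\id$ at $c$ occurs exactly at order $k=r$, the $r$-th Taylor coefficient of $\psi^{-1}\circ g\circ\psi$ need not exist, and your injectivity argument does not literally apply. The clean fix is to match in the \emph{original} coordinate, where $g$ is honestly $\CC^r$: from the estimate $\nu_f(x)\sim f(x)-x$ near $c$ (a sharpening of Lemma~\ref{l:Szek2}, using that the bound there tends to $0$ on shrinking intervals) one gets $\phi_\tau(x)-x\sim \tau\bigl(f(x)-x\bigr)\sim \tau\,a_k(x-c)^k$ on each side, and since $g$ is $\CC^r$ with $k\le r$, the two one-sided limits of $(g(x)-x)/(x-c)^k$ must agree, forcing $\tau_a a_k=\tau_b a_k$. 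With this adjustment your outline is sound; as you yourself note, the substantive work remains the existence of the local $\CC^{r-1}$ normal form, which is exactly what the paper imports from \cite{Ta,Yo}.
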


\begin{lemma} \label{l:Takens}
Let $f, g \in \DD^r_+ [0,1]$ be two commuting diffeomorphisms, $F$ the set of 
their common fixed points, $F_0 \subset F$ the subset of those where both $f$ 
and $g$ are $\CC^r$-tangent to the identity, and $(a,b)$ a connected component 
of $U_0 = [0,1] \setminus F_0$.
\begin{maliste}
\item[1.]
If $(a,b)$ contains a rational component of $U = [0,1] \setminus F$, there
exists a diffeomorphism $h \in \DD^r_+ [a,b]$ and relatively prime integers $p,
q \in \Z$ such that $f \res{ [a,b] } = h^q$ and $g \res{ [a,b] } = h^p$.

\item[2.]
If $(a,b)$ contains an irrational component of $U$, there exists a vector field
$\nu_f^{ [a,b] }$ of class $\CC^1$ on $[a,b]$, $\CC^{r-1}$ on $(a,b)$,
$\CC^1$-flat at the boundaries and whose time-$1$ and $\tau$ maps, for some
$\tau \in \R \setminus \Q$, are $f \res{ [a,b] }$ and $g \res{ [a,b] }$,
respectively.
\end{maliste}
\end{lemma}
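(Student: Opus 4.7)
The plan rests on two observations: first, that $F\cap(a,b)$ is a discrete subset of $(a,b)$; and second, that every component of $U\cap(a,b)$ carries the same commensurability type --- a common coprime pair $(p,q)\in\Z^2$ in the rational case or a common $\tau\in\R\setminus\Q$ in the irrational one --- so the two alternatives of the lemma are mutually exclusive and each follows by a gluing.

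For discreteness, fix $c\in F\cap(a,b)$. Since $c\notin F_0$, one of $f,g$ --- say $f$ --- is not $\CC^r$-tangent to $\id$ at $c$, so some derivative of order at most $r$ of $f-\id$ at $c$ is non-zero; Taylor's formula then makes $c$ an isolated zero of $f-\id$ and hence an isolated point of $F$. On a neighborhood $V_c\subset(a,b)$ where $c$ is the unique fixed point of $f$, Takens' Theorem~\ref{t:Takens} furnishes a $\CC^{r-1}$ vector field $\nu^c$ with time-$1$ map $f$ and presents $g\res{V_c}$ as the time-$\tau_c$ map of $\nu^c$ for some $\tau_c\in\R$ (if it is $g$ rather than $f$ that is not $\CC^r$-tangent to $\id$ at $c$, I apply Takens to $g$ instead). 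If a component $(a',b')$ of $U\cap(a,b)$ is adjacent to $c$, then on a one-sided neighborhood of $c$ inside $(a',b')$ the Szekeres vector field of Lemma~\ref{l:i/r-F} is a $\CC^1$ vector field with the same time-$1$ map, and so coincides with $\nu^c$ by Theorem~\ref{t:Z1}; in particular the time of $g$ relative to $f$ on $(a',b')$ agrees with $\tau_c$. Because any compact subpath of $(a,b)$ meets only finitely many points of the discrete set $F\cap(a,b)$, walking across boundary points propagates the equality and establishes the announced constancy.

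For Case~1, let $(p,q)\in\Z^2$ with $(p,q)=1$ be the common coprime pair, choose $r,s\in\Z$ with $pr+qs=1$, and set $h:=(f^sg^r)\res{[a,b]}$. The map $h$ is $\CC^r$ as a composition of $\CC^r$ diffeomorphisms and fixes $a,b\in F$, so $h\in\DD^r_+[a,b]$. The explicit formula for the local generator in Lemma~\ref{l:i/r-F}(1) is precisely $h$, so $h^q=f$ and $h^p=g$ hold on $U\cap[a,b]$; on the countable set $F\cap[a,b]$, all four maps fix every point, so the identities are trivial there. Continuity --- or the denseness of $U\cap[a,b]$ --- extends them to all of $[a,b]$.

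For Case~2, I glue the Szekeres vector fields $\nu_f^{[a',b']}$ from Lemma~\ref{l:i/r-F}(2) on each component of $U\cap(a,b)$ to the Takens fields $\nu^c$ at each $c\in F\cap(a,b)$; uniqueness forces them to agree on overlaps, so the result is a $\CC^{r-1}$ vector field $\nu$ on $(a,b)$ with $f$ and $g$ as its time-$1$ and time-$\tau$ maps. The remaining step --- the main obstacle --- is boundary regularity: I must extend $\nu$ by zero to a $\CC^1$ vector field on $[a,b]$ which is $\CC^1$-flat at $a$ and $b$. Since $a,b\in F_0$, $f$ is $\CC^r$-tangent to $\id$ at these endpoints, so $\norm{f-\id}_2\to0$ on intervals shrinking to $a$ or $b$. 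Applying Lemma~\ref{l:Szek2} uniformly to each component of $U$ clustering near, say, $a$ yields
\begin{equation*}
   \sup \lrbars{ \log \frac{\nu}{f-\id} } < u(\delta)
   \quad \text{and} \quad
   \sup |D\nu| < u(\delta),
\end{equation*}
with $\delta$ the local $\CC^2$ size of $f-\id$. As $\delta\to 0$, both $\nu$ and $D\nu$ tend to $0$; setting $\nu(a):=0$ then gives a continuous extension, $D\nu$ is continuous at $a$ with value $0$, and a mean value argument shows that $\nu$ is differentiable at $a$ with derivative $0$. The analogous estimate at $b$ completes the construction.
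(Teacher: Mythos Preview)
The paper does not actually give a proof of this lemma; it only states that it is ``a straightforward consequence'' of Takens' Theorem~\ref{t:Takens}, so there is nothing to compare against. Your argument is the natural way to unpack that sentence and is essentially correct: discreteness of $F\cap(a,b)$, propagation of the commensurability type across each point of $F\cap(a,b)$ via Takens, the global formula $h=f^sg^r$ in the rational case, and the $\CC^1$-flatness at the endpoints via Lemma~\ref{l:Szek2} are exactly the ingredients one needs.

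One small point of care: when at some $c\in F\cap(a,b)$ it is $g$ rather than $f$ that fails to be $\CC^r$-tangent to the identity, the Takens field $\nu^c$ you produce has $g$, not $f$, as its time-$1$ map, so the sentence ``the Szekeres vector field \dots\ with the same time-$1$ map \dots\ coincides with $\nu^c$'' is not literally correct. What is true is that $\nu^c$ coincides with the Szekeres field of $g$ on the adjacent components, hence with $\tau\,\nu_f$ there; the field you glue at $c$ is then $(1/\tau)\nu^c$, which is still $\CC^{r-1}$. In the irrational case $\tau\neq 0$, so this rescaling is harmless; in the rational case you do not glue fields at all but only use constancy of the ratio $(p:q)$, which your argument does establish. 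With this cosmetic adjustment the proof is complete.
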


From now on, a \emph{rational} (resp. \emph{irrational}) component will be any
component of $U_0 = [0,1] \setminus F_0$ which contains (only) rational (resp.
irrational) components of $[0,1] \setminus F$.

\subsection{Regularity at the degenerated fixed points}

Theorem A is a straightforward consequence of the following lemma.

\begin{lemma} \label{l:connexite}
Let $f, g \in \DD^{r}_+[0,1]$ be two commuting diffeomorphisms, $F_0$ the set of
the common fixed points where both are $\CC^r$-tangent to the identity, $U_0$ 
the complement $[0,1] \setminus F_0$ and $\nu$ the vector field on $[0,1]$ equal
to $\nu_f^{[a,b]}$ on the closure of any irrational component $(a,b)$ of $U_0$ 
and to $0$ elsewhere.  For any $t \in [0,1]$, define $f_t, g_t \from [0,1] \to 
[0,1]$ as follows\up:
\begin{maliste}
\item
on $F_0$, set $f_t = f = \id$ and $g_t = g = \id$\up;
\item
on each rational component of $U_0$ where $f = h^q$ and $g = h^p$ for relatively
prime integers $p, q \in \Z$, set $f_t = h_t^q$ and $g_t = h_t^p$, where $h_t = 
(1-t)h + t\id$\up;
\item
on each irrational component of $U_0$ where $f$ and $g$ coincide with the flow
of $\nu$ at times $1$ and $\tau \in \R \setminus \Q$, respectively, let $f_t$ 
and $g_t$ be the flow maps of $\nu$ at times $(1-t)$ and $(1-t) \tau$, 
respectively.
\end{maliste}
Then the arc $t \in [0,1] \mapsto (f_t,g_t)$ is a continuous path in the space 
$\RR^1 \subset (\DD^1_+ [0,1])^2$ of commuting $\CC^1$ diffeomorphisms.
\end{lemma}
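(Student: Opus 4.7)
The plan is to postpone commutativity, which is built in (on $F_0$ both maps are $\id$; on a rational component they are powers of the same $h_t$; on an irrational component they are time maps of the same flow $\nu$), and to focus on the two regularity statements: each $f_t$ and $g_t$ should be a $\CC^1$ diffeomorphism of $[0,1]$, and the arc $t \mapsto (f_t, g_t)$ should be continuous into $(\DD^1_+[0,1])^2$. On each component of $U_0$ the local regularity of $f_t$ is clear: in the rational case $h_t = (1-t)h + t\id$ is a $\CC^r$ diffeomorphism of $[a,b]$ because its derivative is a convex combination of positive functions and it fixes both endpoints, so $h_t^q$ and $h_t^p$ are too; in the irrational case $f_t$ is the time-$(1-t)$ map of the $\CC^1$ vector field $\nu_f^{[a,b]}$ supplied by Lemma \ref{l:Takens}. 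The entire question is how the pieces fit together across $F_0$.

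I would first fix $t$ and check $\CC^1$-regularity at a point $c \in F_0$. Clearly $f_t(c)=c$; what is needed is $Df_t(c)=1$ together with continuity of $Df_t$ at $c$. On an adjacent irrational component, $\nu_f^{[a,b]}$ is $\CC^1$-flat at $c$, so every one of its time maps has derivative $1$ at $c$. On an adjacent rational component with $f = h^q$, the $\CC^r$-tangency of $f$ to $\id$ at $c$ forces $h$ to be $\CC^r$-tangent to $\id$ at $c$ (at a parabolic fixed point of $h$ a first non-trivial jet coefficient $c_k$ would produce a first non-trivial coefficient $q c_k$ for $h^q$, so vanishing of the iterate's jet forces vanishing of $h$'s jet); hence $h_t$ is tangent to $\id$ at $c$ and $D(h_t^q)(c)=1$. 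When $c$ is an accumulation point of components of $U_0$, the matching must further be uniform across them. For an irrational component with $\delta := \norm{f-\id}_2$ computed on it, Lemma \ref{l:Szek2} directly gives
\begin{equation*}
   \norm{f_t - \id}_1 \leq (1-t) \, v(\delta) \leq v(\delta),
\end{equation*}
and $\delta \to 0$ as the component shrinks to $c$ because $f$ is $\CC^r$-tangent to $\id$ at $c$ with $r \ge 2$. For a rational component I would prove the analogous uniform bound $\norm{h_t^q - \id}_1 \leq C \, v(\delta)$, with $C$ independent of $t$ and $q$, by imitating the telescoping argument of Lemma \ref{l:Szek2}: write $\log D h_t^q = \sum_{i=0}^{q-1} \log Dh_t \circ h_t^i$, use $Dh_t - 1 = (1-t)(Dh-1)$, and compare the sum along the $h_t$-orbit with the sum along the $h$-orbit, which adds up exactly to $\log Df$ and is therefore small in sup norm by hypothesis.

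For continuity in $t$, fix $t_0 \in [0,1]$ and $\epsilon > 0$. The uniform bounds above produce an open neighborhood $V$ of $F_0$ on which $\norm{f_t - \id}_1 < \epsilon/2$ for every $t$, so $\norm{f_t - f_{t_0}}_1 < \epsilon$ on $V$. The compact complement $K = [0,1] \setminus V$ meets only finitely many components of $U_0$; on each of them, $t \mapsto f_t$ is continuous into $\DD^1_+$ (by smooth dependence of $h_t^q$ on $t$ through composition in the rational case, and by continuous dependence of the flow of a $\CC^1$ vector field on time in the irrational case). Choosing $|t-t_0|$ small enough yields $\norm{f_t - f_{t_0}}_1 < \epsilon$ on $K$, and hence globally; the same argument handles $g_t$. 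The main technical obstacle, in my view, is the uniform-in-$q$ bound in the rational case, which is not a direct consequence of Lemma \ref{l:Szek2} since $h_t$ is not a Szekeres time map, but which should follow from the same telescoping technique used to prove that lemma.
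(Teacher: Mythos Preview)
Your overall architecture coincides with the paper's: commutativity is automatic, regularity is local on each component of $U_0$, and the only issue is uniform control near $F_0$, obtained from Lemma~\ref{l:Szek2} and then combined with a compactness argument for continuity in $t$. Two points, however, are handled more cleanly in the paper than in your sketch.

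First, for differentiability of $f_t$ at $c \in F_0$, the paper bypasses any discussion of jets of $h$ or flatness of $\nu$ at endpoints. It simply observes that, by construction, $|f_t(x)-x| \le |f(x)-x|$ for every $t$ and $x$ (on a rational component because $x \le h_t(x) \le h(x)$ pointwise and this persists under iteration; on an irrational component because $f_t$ is a sub-time of the flow). Since $f-\id$ is $\CC^1$-flat at $c$, the difference quotient $\bigl(f_t(x)-f_t(c)\bigr)/(x-c)-1$ is squeezed to $0$ directly.

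Second, and more importantly, what you call ``the main technical obstacle'' is not one. On a rational component with $q \ne 0$, Lemma~\ref{l:i/r-F} says that $h$ \emph{is} a Szekeres time map: on each sub-component of $U$ it is the time-$1/q$ map of $\nu_f$. Hence Lemma~\ref{l:Szek2} applies to $h$ itself and gives $\|h-\id\|_1 \le \tfrac{1}{|q|}\,v(\delta)$. From this the bound you want is immediate:
\[
\bigl|\log Dh_t^q(x)\bigr|
 = \Bigl|\sum_{i=0}^{q-1}\log\bigl(1+(1-t)(Dh(h_t^i(x))-1)\bigr)\Bigr|
 \le q \cdot 2\|h-\id\|_1 \le 2\,v(\delta),
\]
uniformly in $t$ and $q$. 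No comparison between the $h_t$-orbit and the $h$-orbit is needed; your proposed comparison would be delicate precisely because, absent this $1/q$ bound, you have no a priori control on the individual terms $\log Dh$.
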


\begin{proof}
The maps $f_t$ and $g_t$ clearly commute for all $t \in [0,1]$. Since $f$ and 
$g$ play symmetric roles in the construction, it is sufficient to prove that $t 
\mapsto f_t$ is a continuous path in $\DD^1_+[0,1]$. First observe that, for all 
$t \in [0,1]$, the map $f_t$ is a homeomorphism of $[0,1]$ and, according to
Lemma \ref{l:Takens}, induces a $\CC^{r-1}$ diffeomorphism of $U_0$. Now let us
prove that $f_t$ is differentiable at any point $c$ of $F_0$, with derivative 
equal to $1$.

Let $c \in F_0$. By construction,
$$ |f_t(x) - x| \leq |f(x) - x| \quad 
   \text{for all } (t,x) \in [0,1] \times [0,1]. $$
If $x \neq c$, 
\begin{align*}
   \lrbars{ \frac{ f_t(x) - f_t(c) }{ x-c } - 1 }
&= \lrbars{ \frac{ (f_t(x)-x) - (f_t(c)-c) }{ x-c } } \\
&= \lrbars{ \frac{ f_t(x)-x }{ x-c } } \\
& \le \lrbars{ \frac{ f(x)-x }{ x-c }} \\
&= \lrbars{ \frac{ (f(x)-x) - (f(c)-c) }{ x-c } }.
\end{align*}
Since $c \in F_0$, the map $f-\id$ is $\CC^1$-flat at $c$, so the above quantity
tends to $0$ as $x$ goes to $c$. Therefore, $f_t$ admits a derivative at $c$
which is equal to $1$.

\medskip

To conclude the proof, one needs to check that the map $\Psi \from (t,x) \mapsto
Df_t(x)$ (now well-defined) is continuous on $[0,1] \times [0,1]$. For every
component $(c,d)$ of $U_0$, continuity on $[0,1] \times [c,d]$ follows readily 
from Lemma \ref{l:Takens}. In particular, the limit of $Df_s(x)$ as $(s,x)$ 
approaches $(t,c)$ in $[0,1] \times [c,1]$ exists and equals $Df_t(c) = 1$. Now
assume that $c$ is an accumulation point of $F_0$ on the right side. For all
$\delta > 0$, there exists a point $d \in F_0 \cap (c,1]$ such that
$$ \lrBars{ f - \id \res{ [c,d] } }_2 < \delta. $$
Let $(s,x) \in [0,1] \times [c,d]$. If $x$ belongs to $F_0$ then $Df_s(x) = 1$. 
If $x$ belongs to an irrational component $(a,b)$ of $U_0 \cap [c,d]$  then $f_s
\res{ [a,b] }$ belongs to the flow of $\nu$, so $\nu \circ f_s(x) = Df_s(x) \,
\nu(x)$. If $x$ is not a fixed point of $f$ (\emph{i.e.} a zero of $\nu$) then
\begin{align*}
   |Df_s(x) - 1| &= \left|\frac{ \nu \circ f_s(x) - \nu(x) }{ \nu(x) }\right| \\
&\le \sup_{ (a,b) } |D\nu| \left|\frac{ f_s(x) - x }{ \nu(x) }\right| \\
&\le \sup_{ (a,b) } \left|D\nu_f^{[a,b]}\right| \, 
   \left|\frac{ f(x) - x }{ \nu(x) }\right|
 \le u(\delta) e^{u(\delta)} \quad 
   \text{according to Lemma \ref{l:Szek2}}.
\end{align*}
This upper bound still holds for all $x \in [a,b]$ since one already knows that
$\Psi$ is continuous on $[0,1] \times [a,b]$.

Assume now that $x$ belongs to a rational component $(a,b)$ of $U_0 \cap [c,d]$
where $f = h^q$ and $g = h^p$, with $p, q \in \Z$ relatively prime. If $q$ is
zero, $f_s = \id$ on $[a,b]$ so $Df_s(x) = 1$. If $q$ is nonzero, Lemma \ref
{l:Szek2} bounds $\norm{ h-\id }_1$ by $\frac1q v(\delta)$. In particular, for 
$\delta$ small enough, $\norm{ h-\id }_1 < 1/2$. Since $f_s = h_s^q$ on $[a,b]$,
\begin{align*}
   \lrbars{ \log Df_s(x) } = \lrbars{ \log D(h_s^q)(x) } 
&= \lrbars{ \sum_{i=0}^{q-1} \log Dh_s \bigl(h_s^i (x)\bigr) } \\
&\le \sum_{i=0}^{q-1} \lrbars 
   {\log \biggl(1 + (1-s) \Bigl(Dh \bigl(h_s^i(x)\bigr) - 1\Bigr)\biggr) } \\
&\le \sum_{i=0}^{q-1} 2 (1-s) \norm{ h-\id}_1 \le 2 v(\delta) . 
\end{align*}
Thus, $Df_s(x)$ tends to $1 = Df_s(c)$ as $(s,x)$ goes to $(t,c)$ in $[0,1] 
\times [c,1]$. Similarly, $Df_s(x)$ tends to $1$ as $(s,x)$ goes to $(t,c)$ in 
$[0,1] \times [0,c]$. This proves the continuity of $\Psi$ at every point in 
$[0,1] \times F_0$, and thus on the whole of $[0,1] \times [0,1]$.
\end{proof}

\end{document}